\newtheorem{theorem}{Theorem}[section]
\newtheorem{lemma}[theorem]{Lemma} 
\newtheorem{conjecture}[theorem]{Conjecture}
\theoremstyle{definition}
\theoremstyle{remark}
\newcommand{\N}{\mathcal{N}}
\newcommand{\M}{\mathcal{M}}
\newcommand{\I}{\mathcal{I}}
\def\keywords{\xdef\@thefnmark{}\@footnotetext}
\DeclareMathOperator{\Circ}{\mathrm{circ}}
\newcommand{\xDownarrow}[1]{%
  {\left\Downarrow\vbox  to #1{}\right.\kern-\nulldelimiterspace}
}
\title{A common generalization to strengthenings of Drisko's Theorem for intersections of two matroids}
\author{Eli Berger and Daniel McGinnis}
\date{}
\begin{document}

\maketitle

\begin{abstract}
        Let $\M$ and $\N$ be two matroids on the same ground set $V$. Let $A_1,\dots,A_{2n-1}$ be sets which are independent in both $\M$ and $\N$, satisfying $|A_i|\geq \min(i,n)$ for all $i$. We show that there exists a partial rainbow set of size $n$, which is independent in both $\M$ and  $\N$. This is a common generalization of rainbow matching results for bipartite graphs by Aharoni, Berger, Kotlar, and Ziv, and for the intersection of two matroid by Kotlar and Ziv. 
\end{abstract}

\section{Introduction}

Let $(M_1,\dots,M_n)$ be a collection (possibly with repetition) of matchings (vertex disjoint edges) in a bipartite graph $G$. A \textit{partial rainbow matching} of size $m$ is a set of pairwise vertex disjoint edges $e_1,\dots,e_m$ such that $e_1\in M_{i_1},\dots, e_m\in M_{i_m}$ where $1\leq i_1 < \cdots<i_m\leq n$.
The following conjecture of Aharoni and Berger from \cite{aharoni2009rainbow} on rainbow matchings is of significant interest (see also Conjecture 1.6 in \cite{aharoni2018degree}).

\begin{conjecture}[Berger-Aharoni \cite{aharoni2009rainbow}]\label{conj:n matchings}
    A collection of $n$ matchings of size $n$ in a bipartite graph has a partial rainbow matching of size $n-1$.
\end{conjecture}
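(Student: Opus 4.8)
Here I sketch a plan of attack on Conjecture~\ref{conj:n matchings} --- the notorious open case at the center of this circle of problems --- together with the point at which I expect it to stall. The base camp is Drisko's theorem ($2n-1$ matchings of size $n$ force a rainbow matching of size $n$) and its ``staircase'' strengthening due to Aharoni, Berger, Kotlar and Ziv; the whole problem is to run these arguments with only $n$ colours in place of $2n-1$, paying for the deficit by settling for a rainbow matching of size $n-1$.

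\emph{Combinatorial (augmenting-path) attack.} Take a rainbow matching $R=\{e_1,\dots,e_k\}$ with $e_j\in M_{c_j}$, $c_1<\dots<c_k$, of maximum size $k$, chosen among all maximum ones so that $(c_1,\dots,c_k)$ is lexicographically least. Suppose $k\le n-2$. For an unused colour $c$ the matching $M_c$ has $n>k$ edges, so $M_c\triangle R$ contains an augmenting path $P=f_0 r_1 f_1\cdots r_t f_t$ with $f_i\in M_c$ and $r_i\in R$. The idea is to ``rotate'' along $P$: the safe elementary move is to delete a single $R$-edge $r_i$ lying on $P$ and insert the adjacent $M_c$-edge, which yields a new maximum rainbow matching whose colour multiset is that of $R$ with one entry replaced by $c$; chaining such moves one hopes either to lexicographically decrease the colour sequence (contradicting the choice of $R$) or to free a colour at an end of $P$ and genuinely extend $R$. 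Drisko's threshold $2n-1$ leaves exactly $n$ unused colours once $|R|=n-1$, which turns out to be just enough room for such rotation chains to be forced through; with only $n$ colours available in total and $|R|$ possibly as large as $n-2$ there are only two spares, and the rotation chains can close into cycles that return to the original colour configuration. Ruling out that \emph{every} rotation chain is a dead end --- equivalently, showing that $n$ genuinely distinct matchings must admit a productive one --- is the step I expect to be the true obstacle.

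\emph{Topological attack.} Encode the matchings of the bipartite graph $G$ as faces of the matching complex $\M(G)$ (equivalently, the independence complex of the line graph $L(G)$, which is perfect because $G$ is bipartite) and invoke the colourful topological Hall theorem of Aharoni and Haxell: if $\eta\bigl(\M(G)[\bigcup_{j\in J}M_j]\bigr)\ge|J|$ for every $J\subseteq[n]$, then there is a rainbow face meeting every $M_j$, i.e.\ a rainbow matching of size $n$. Since only size $n-1$ is needed, the plan is to prove that \emph{some} $(n-1)$-element subfamily of the $M_j$ satisfies this system of inequalities, which demands sharp lower bounds on the connectivity $\eta$ of matching complexes of bipartite graphs in terms of the matching number, and control of how they degrade under unions. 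The catch is that no clean inequality $\eta(\M(G))\ge\nu(G)$ holds --- already $\M(C_4)$ is two disjoint points while $\nu(C_4)=2$ --- so one must work with a genuinely structural estimate (for instance Meshulam's domination-game bound applied to the perfect graph $L(G)$), strong enough to survive passage to an $(n-1)$-subfamily yet still consistent with the tight constructions, such as $n-1$ copies of one perfect matching of a $2n$-cycle together with a single copy of the other. Calibrating such a bound is the corresponding bottleneck.

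Finally, one cannot simply bootstrap from the main theorem of this paper (staircase Drisko for the intersection of two matroids, applied to the two partition matroids of $G$): extracting a rainbow matching of size $m$ that way needs $2m-1$ \emph{distinct} matchings among the $M_j$ --- repeating or splitting a matching only re-uses or subdivides a single colour --- so from $n$ matchings one gets only $m=\lfloor(n+1)/2\rfloor$, roughly half of the target. Any proof of the full conjecture therefore has to exploit the presence of $n$ distinct colours globally rather than one augmentation at a time, and it would not surprise me if a genuinely new ingredient is needed.
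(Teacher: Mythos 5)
The statement you were asked about is Conjecture~\ref{conj:n matchings}, which is an \emph{open conjecture}: the paper states it only to motivate its results and offers no proof of it, so there is nothing in the paper to compare your argument against. Your submission, to its credit, does not claim to prove it either --- it is a survey of two lines of attack together with an honest account of where each one stalls. As a proof it therefore has the most fundamental possible gap: no complete argument is given for any case beyond what Drisko-type results already cover. Both of your identified bottlenecks are real. In the augmenting-path attack, the rotation/exchange argument that proves the $2n-1$ threshold genuinely uses the surplus of unused colours to force a productive chain, and with only $n$ colours the chains can indeed recur without progress; no one has found a way to break these cycles. In the topological attack, your observation that $\eta$ of the matching complex can fall below the matching number (already for $C_4$) is exactly why a naive application of the Aharoni--Haxell/Aharoni--Berger machinery to an $(n-1)$-subfamily does not go through. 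Your final remark is also correct: Theorem~\ref{thm:main} applied to the two partition matroids of the bipartition recovers only a rainbow matching of size roughly $n/2$ from $n$ matchings, since the hypothesis counts sets, not sizes, so the main theorem of this paper cannot be bootstrapped into the conjecture.

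If you want to turn this into publishable progress rather than a correct diagnosis of difficulty, the realistic targets are partial results: the conjecture is known for small $n$, and the best general bounds replace $n$ matchings by roughly $(3/2+o(1))n$ matchings (or settle for a rainbow matching of size $n-O(1)$ under stronger hypotheses). Any writeup should cite that literature and clearly label Conjecture~\ref{conj:n matchings} as open.
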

The fact that the number $n-1$ in Conjecture \ref{conj:n matchings} cannot be replaced by $n$ follows from the following example. Let $n$ be even and let $G$ be the complete bipartite graph with both partite sets equal to $[n]=\{1,2, 3, \dots, n\}$. Note that we think of the edges in such a graph as ordered pairs, where $(i,j)$ means we take the vertex $i$ from the first partite set and the vertex $j$ from the second partite set. The collection of $n$ matchings $M_i$ for $1\leq i\leq n$, where $M_i$ is given by the set of edges $(j,j+i)$ (taking addition modulo $n$), does not have a rainbow matching of size $n$. 

Conjecture \ref{conj:n matchings} is closely related to well-known open problems concerning transversals of latin squares by Ryser, Brualdi, and Stein \cite{Ryser1967,brualdi1991combinatorial,stein1975transversals}. See for instance \cite{aharoni2009rainbow} for the statements of these conjectures and how they are related to Conjecture \ref{conj:n matchings}.

One may naturally ask how many matchings of size $n$ of a bipartite graph $G$ are needed to guarantee a partial rainbow matching of size $n$. Interestingly, almost twice as many matchings are required.

\begin{theorem}[Aharoni-Berger \cite{aharoni2009rainbow}]\label{thm:bipartitie Drisko}
    A collection of $2n-1$ matchings of size $n$ in a bipartite graph has a partial rainbow matching of size $n$.
\end{theorem}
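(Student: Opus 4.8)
The plan is to prove Theorem~\ref{thm:bipartitie Drisko} by Drisko's alternating-path method. It suffices to show the following augmentation statement: \emph{if $R$ is a rainbow matching of size $k<n$ drawn from $F_1,\dots,F_{2n-1}$, then there is a rainbow matching of size $k+1$}; for then a maximum rainbow matching has size at least $n$, and any $n$ of its edges form the required rainbow matching. So fix such an $R$ and relabel so that $R=\{e_1,\dots,e_k\}$ with $e_i\in F_i$. Since $R$ uses only $k\le n-1$ colors, at least $2n-1-k\ge n>k$ colors are unused; this surplus of unused colors over used colors is what drives the whole argument and is the sole place the hypothesis ``$2n-1$ matchings'' (rather than fewer) enters.

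For an unused color $c$ we have $|F_c|\ge n>k=|R|$, so the graph $R\cup F_c$ --- a vertex-disjoint union of paths and even cycles --- contains an $F_c$-augmenting path $P$: edges alternating between $F_c$ and $R$, beginning and ending with edges of $F_c$, with exactly one more $F_c$-edge than $R$-edge. If some such $P$ consists of a single edge $f\in F_c$, then $f$ avoids $V(R)$ and $R\cup\{f\}$ is a rainbow matching of size $k+1$, and we are done. Hence we may assume that for every unused color $c$, every $F_c$-augmenting path in $R\cup F_c$ has at least two $F_c$-edges (and at least one $R$-edge).

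This ``long path'' situation is the main obstacle, and overcoming it is the real content of the theorem. The difficulty is that the usual augmentation --- deleting the $R$-edges along $P$ and inserting the $F_c$-edges --- yields a matching of size $k+1$ that is \emph{not} rainbow, since all of its $\ge 2$ new edges would have to carry the single color $c$. The way around it is a rotation argument exploiting the surplus of unused colors. For each unused color $c$ pick a shortest $F_c$-augmenting path $P_c$ and record the first $R$-edge it meets (and the vertex at which it enters that edge); since there are more unused colors than $R$-edges, two unused colors $c\ne c'$ record the same edge $e_j$. Using this coincidence one replaces $R$ by $(R\setminus\{e_j\})\cup\{f\}$, where $f$ is an appropriate $F_c$-end-edge of an augmenting path: this is again a rainbow matching, of the same size $k$, but now the color $j$ is free, a new vertex is exposed, and the count of unused colors is unchanged --- so the hypotheses of the argument persist, while a carefully chosen potential (e.g.\ built from the lengths of the augmenting paths, or from a fixed linear order on the colors) strictly decreases. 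Iterating, after finitely many rotations one reaches an $R$ admitting a single-edge augmenting path, producing the size-$(k+1)$ rainbow matching. The technical heart is choosing the potential --- equivalently, organizing the rotations via a K\"onig/Hall-type analysis of the map ``unused color $\mapsto$ first used color on its shortest augmenting path'' --- so that termination is guaranteed and no rotation ever destroys the rainbow property or shrinks $R$; the remaining steps are routine facts about alternating paths in a union of two matchings.
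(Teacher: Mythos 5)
The paper does not prove Theorem \ref{thm:bipartitie Drisko} directly: it obtains it as a special case of Theorem \ref{thm:2matroids} (and ultimately of Theorem \ref{thm:main}), by encoding the two sides of the bipartition as two partition matroids on the edge set; the underlying proof is topological, via $\eta$ and Theorem \ref{thm:matchability}. Your attempt at a direct alternating-path proof in the spirit of Drisko and of Aharoni--Berger's original argument is therefore a legitimately different route, and a correct elementary proof along these lines does exist. However, as written, your argument has a genuine gap exactly where you yourself locate ``the technical heart.''

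The reduction to the case where every augmenting path for every unused color has at least two new edges is fine, as is the count of at least $n>k$ unused colors. But the rotation step is not a proof. First, the pigeonhole coincidence you extract (two unused colors $c\neq c'$ whose shortest augmenting paths first meet the same edge $e_j$) is never actually used: the replacement $R\mapsto(R\setminus\{e_j\})\cup\{f\}$ involves only one color, so the role of $c'$ is unexplained; moreover, if you need the two paths to enter $e_j$ at the \emph{same vertex}, the pigeonhole requires more than $2k$ unused colors, which the hypothesis does not supply. Second, and more seriously, termination is asserted rather than proved: you do not define the potential, and naive local swaps of this kind can cycle --- the new matching exposes a different vertex and changes which color is free, so quantities such as the multiset of shortest augmenting-path lengths can increase as well as decrease after a rotation. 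Organizing the swaps so that progress is guaranteed is precisely the content of Drisko's and Aharoni--Berger's arguments, which do it by building a nested sequence of alternating paths, consuming one fresh unused matching at each step and showing the construction must produce an augmentation within $k+1\le 2n-1-k$ steps. Until you specify the potential and verify that each rotation strictly decreases it while preserving all the stated invariants, the proof is incomplete.
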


The following example shows that the number $2n-1$ in Theorem \ref{thm:bipartitie Drisko} cannot be replaced by a smaller number. Let $G$ be the cycle on $2n$ vertices and take $n-1$ copies of one maximum matching and $n-1$ copies of the other maximum matching. There is no rainbow partial matching of size $n$ for this collection of matchings.

Theorem \ref{thm:bipartitie Drisko} is a generalization of the following well-known theorem of Drisko. For the purpose of stating this result, we say that a \textit{diagonal} of a matrix is a set of entries with at most one entry in each column and at most one entry in each row. 

\begin{theorem}[Drisko \cite{drisko1998transversals}]\label{thm: drisko}
    Let $X$ be an $n\times (2n-1)$ matrix such that each column contains each of the integers $1,\dots,n$. Then there is a diagonal of $X$ that contains each of the integers $1,\dots,n$.
\end{theorem}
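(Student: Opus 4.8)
The plan is first to translate the matrix statement into the language of rainbow matchings. Form the complete bipartite graph $K_{n,n}$ whose two sides are the set of rows $\{1,\dots,n\}$ and the set of values $\{1,\dots,n\}$, and for each column $j$ of $X$ let $M_j=\{(i,X_{ij}):1\le i\le n\}$. Since column $j$ contains each of $1,\dots,n$ exactly once, $M_j$ is a perfect matching of $K_{n,n}$, and in particular has size $n$. A diagonal of $X$ containing each of $1,\dots,n$ is precisely a set of $n$ pairwise disjoint edges, no two belonging to the same $M_j$; that is, a partial rainbow matching of size $n$ for the collection $(M_1,\dots,M_{2n-1})$. Hence the theorem is exactly the special case of Theorem~\ref{thm:bipartitie Drisko} in which every matching has size $n$, and it follows at once.

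For a proof that does not quote Theorem~\ref{thm:bipartitie Drisko}, I would argue directly with augmenting paths. Let $F$ be a maximum partial rainbow set of entries of $X$ --- pairwise in distinct rows, in distinct columns, and with distinct values --- say of size $m$ using the columns in a set $I$, and suppose for contradiction that $m\le n-1$. Then $F$ misses some row $r_0$ and some value, and there are $2n-1-m\ge m+1$ columns outside $I$. Grow an alternating walk from $r_0$ in the bipartite incidence picture between rows and values: from a row $r$, select a column $j\notin I$ not yet used on the walk and pass to the value $w=X_{rj}$; if $w$ has already been visited, discard this choice; if $w$ is not a value covered by $F$, stop; otherwise move to the unique row $r'$ in which $F$ realizes the value $w$, provided $r'$ has not yet been visited, and repeat. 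If the walk ever stops at an uncovered value, the standard alternating-path exchange --- delete from $F$ the $F$-entries traversed and insert the new entries traversed --- produces a partial rainbow set of size $m+1$, because the inserted entries lie in $m+1$ distinct columns (the newly used columns being outside $I$ and pairwise distinct by construction). This contradicts the maximality of $F$.

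The main obstacle is to guarantee that some such walk reaches an uncovered value, that is, that the walk need never get stuck at a row $r_t$ from which every still-available column leads only to already-visited values. To control this I would take $F$ to be extremal among all maximum partial rainbow sets --- for instance one whose set of used columns is lexicographically least --- and show that a stuck walk can be reconfigured, by a local exchange along it, into a maximum partial rainbow set using a lexicographically smaller set of columns, contradicting the choice of $F$. The inequality $2n-1-m\ge m+1$ is exactly what keeps enough unused columns available at each step to drive the pigeonhole that forces progress. The delicate bookkeeping is tracking which columns are currently \emph{in $F$}, \emph{used on the walk}, or \emph{free}; the rest is routine. Alternatively, one can follow Drisko's original route \cite{drisko1998transversals}, deducing the statement from the topological connectivity of the complex of partial rainbow matchings together with a topological Hall-type theorem, the sharp value $2n-1$ reflecting the topology of the $2n$-cycle example.
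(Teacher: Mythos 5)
Your first paragraph is exactly the paper's own derivation: the paper proves Theorem~\ref{thm: drisko} solely by reducing it to Theorem~\ref{thm:bipartitie Drisko} via the complete bipartite graph on rows and values, and your translation (columns become perfect matchings of size $n$, a diagonal containing all of $1,\dots,n$ becomes a partial rainbow matching of size $n$) is correct and complete. The appended augmenting-path sketch is an optional alternative that is not a finished proof as written (you acknowledge the unresolved ``stuck walk'' case), but it is not needed, since the reduction already settles the statement by the paper's route.
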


To see that Theorem \ref{thm:bipartitie Drisko} implies Theorem \ref{thm: drisko}, note that each column of the matrix $X$ corresponds to a matching of the complete bipartite graph $G$ with both partite sets equal to $[n]$, where $(i,j)$ is an edge of the matching if $j$ is the entry in the $i$'th row of the column. A rainbow matching of size $n$ corresponds to a diagonal as in the conclusion of Theorem \ref{thm: drisko}.

A matroid-theoretic generalization of Theorem \ref{thm: drisko} was proven soon after by Chappel (see Section \ref{preliminaries} for background on matroids).

\begin{theorem}[Chappel \cite{chappell1999matroid}]\label{thm:chappel}
    Let $X$ be an $n\times (2n-1)$ matrix whose entries are elements of the ground set $V$ of a matroid $\M$. Then there is a diagonal of $X$ of size $n$ that is independent in $\M$
\end{theorem}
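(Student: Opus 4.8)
The plan is to build a rainbow independent diagonal cell by cell while controlling which columns are used, and I would prove by induction on $m$ the strengthened assertion: for every $0\le m\le n$ there is a diagonal of $X$ of size $m$, independent in $\M$, using only columns among $\{1,\dots,2m-1\}$. Write $B_1,\dots,B_{2n-1}$ for the columns of $X$, viewed as the sets of entries they list; throughout I use the hypothesis — necessary for the conclusion, since otherwise a matrix all of whose entries are a single non-loop defeats it for $n\ge 2$ — that each $B_j$ is an independent set of $\M$ of size $n$. The case $m=n$ is the theorem; $m=0$ is vacuous and $m=1$ holds because $B_1$, being independent, contains a non-loop.

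For the inductive step $m\to m+1$ I would run an augmenting / alternating-sequence argument adapted to matroid exchange, in the spirit of the proof of Drisko's theorem (Theorem~\ref{thm: drisko}). Fix a size-$m$ independent diagonal $D$ with column set $C\subseteq\{1,\dots,2m-1\}$ and row set $R$, chosen — among all size-$m$ independent diagonals whose columns lie in $\{1,\dots,2m+1\}$ — so that its column set, listed in decreasing order, is lexicographically smallest; put $I=I(D)$, so $|I|=m$. The new columns $2m,\,2m+1$ together with the $m-1$ columns of $\{1,\dots,2m-1\}$ missing from $C$ furnish $m+1$ \emph{available} columns, and for each available column $c$ one has $|B_c|=n>m=\mathrm{rank}_\M(I)$, so at least $n-m\ge 1$ rows $r$ satisfy $X_{r,c}\notin\mathrm{cl}_\M(I)$. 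If one such \emph{good} row lies outside $R$, adjoining the cell $(r,c)$ to $D$ produces a size-$(m+1)$ independent diagonal with columns in $\{1,\dots,2m+1\}$, finishing the step. Otherwise every good row is already occupied in $D$; then one \emph{re-routes}, replacing the occupant $(r,t_r)$ of a good row by $(r,c)$ to obtain a fresh size-$m$ independent diagonal on the same rows with the column $t_r$ vacated, and continues the search from $t_r$ (or from the still-unused column $2m$). The point is then to show that this alternating walk must terminate at a free row rather than cycling: each re-routing is constrained both by matroid exchange and by the lexicographic choice of $D$, and a count of how the $m+1$ available columns interact with the occupied low-index columns of $D$ is exactly where the slack in $2m+1=2(m+1)-1$ gets consumed.

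The main obstacle, which I expect to be the bulk of the proof, is precisely this termination bookkeeping, and it is genuinely harder than in Drisko's setting: there the \emph{good rows} of a column are governed by which parts of the partition matroid are missing from $I$, and that missing-part set mutates predictably under a re-routing, whereas for a general matroid $\mathrm{cl}_\M(I)$ itself shifts after each re-routing, so the set of good rows is not static along the walk. Making it go through requires choosing the re-routings carefully — for instance always taking a good row whose occupant sits in the least available column — and exhibiting a monovariant (a lexicographic potential on the column multiset together with the position of the walk's frontier) that strictly improves, forcing either an augmentation or a contradiction with the minimality of $D$. An alternative, less self-contained route is topological: the independence complex of a loopless rank-$r$ matroid is $(r-2)$-connected, and for every nonempty set $J$ of columns the union $\bigcup_{j\in J}B_j$ has rank at least $n\ge|J|$ as soon as $|J|\le 2n-1$, so a deficiency version of the topological Hall theorem yields a rainbow-in-columns independent set of size $n$; the gap there is that such a set need not occupy distinct rows, so it would still have to be fed back into a transversal/augmentation argument to yield an honest diagonal.
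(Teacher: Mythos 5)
Your proposal is a plan rather than a proof: the inductive set-up (a size-$m$ independent diagonal on columns $\{1,\dots,2m-1\}$, the count of $m+1$ available columns, and the observation that each available column $B_c$, being independent of size $n>m$, contains an element outside $\mathrm{cl}_\M(I)$) is fine, but the entire content of the theorem lies in the termination argument for the re-routing walk, and you explicitly leave that out. You yourself identify the difficulty --- the set of ``good'' rows of a column is not static because $\mathrm{cl}_\M(I)$ changes after each exchange --- yet you do not define the walk precisely (which occupant is replaced, whether a column or row may be revisited, what the frontier is), you do not exhibit the claimed monovariant, and you do not derive the contradiction with lexicographic minimality. Since you describe this as ``the bulk of the proof,'' what remains is a heuristic outline of an exchange argument in the spirit of Drisko and Chappell, not a verification of it. The alternative topological route you sketch has the same acknowledged gap: a deficiency version of topological Hall can produce a column-rainbow independent set of size $n$, but nothing forces its elements to occupy distinct rows, and repairing that is exactly the two-matroid difficulty that the rest of the paper is built to handle.

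For comparison, the paper does not prove Theorem~\ref{thm:chappel} by an exchange argument at all: it deduces it from the Kotlar--Ziv theorem (Theorem~\ref{thm:2matroids}) by encoding columns and rows as two matroids on $V\times[n]$, and Theorem~\ref{thm:2matroids} is in turn a consequence of the main theorem, proved via homological bounds on independence complexes (Theorems~\ref{thm:eta bound} and~\ref{thm:matchability} together with Lemma~\ref{lem:main}). An elementary exchange proof of Chappell's theorem does exist (Chappell's original argument), so your direction is not hopeless, but as written the decisive step is missing. One small point in your favour: you correctly observe that the statement needs each column to be an independent set of size $n$ (otherwise a constant matrix over a single non-loop defeats it for $n\ge 2$); the paper's statement omits this hypothesis, although its derivation from Theorem~\ref{thm:2matroids} silently uses it.
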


Theorem \ref{thm:chappel} is easily seen to be a generalization of Theorem \ref{thm: drisko} by taking $\M$ to be the matroid with ground set $[n]$ where each subset is independent.

Kotlar and Ziv \cite{kotlar2015rainbow} took this one step further by proving the following theorem. Here, if $S_1,\dots,S_n$ are sets, a \textit{partial rainbow set} is a selection of elements $x_1\in S_{i_1},\dots,x_m\in S_{i_m}$ where $1\leq i_1<\cdots<i_m\leq n$.

\begin{theorem}[Kotlar-Ziv \cite{kotlar2015rainbow}]\label{thm:2matroids}
    Let $\M$ and $\N$ be matroids on the same ground set $V$. For any $2n-1$ subsets of $V$ of size $n$ that are independent in both $\M$ and $\N$, there is a partial rainbow set of size $n$ that is independent in both $\M$ and $\N$.
\end{theorem}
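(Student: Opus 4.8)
The plan is to argue by induction on $n$, after two preliminary reductions. First, the ordering condition $i_1<\cdots<i_m$ in the definition of a partial rainbow set is vacuous: once $m$ distinct indices are selected their increasing order is forced, so the goal is simply to find $n$ of the sets $A_i$ together with a system of distinct representatives for them that is independent in both $\M$ and $\N$. Second, shrinking the $A_i$ we may assume $|A_i|=n$ for all $i$, and we fix a counterexample minimal in $n$ and then in $|V|$. Let $F$ be a maximum partial rainbow common independent set and put $s:=|F|$; if $s\ge n$ then any $n$-element subset of $F$ finishes the proof, so $s\le n-1$, and since $m=2n-1>2s$ at least $n$ colors go unused. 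The crucial observation is that each $A_i$ is itself a common independent set of size $n>s$, so $F$ is \emph{not} a maximum common independent set of $\M\cap\N$; hence a common independent set of size $s+1$ certainly exists, and the whole difficulty is to produce one that is still rainbow.

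It is worth saying why the obvious strategies do not apply, since this is what forces a dedicated argument. Contracting a chosen element $x$ and passing to $\M/x$, $\N/x$, and $A_i\setminus x$ fails, because $A_i\setminus x$ need not be independent in $\M/x$ or $\N/x$ when $x\notin A_i$. The purely topological route also fails: a deficiency form of the topological Hall theorem would reduce the statement to a lower bound on the connectivity $\eta$ of the complex of common independent sets restricted to $\bigcup_{i\in I}A_i$, but the complex of common independent sets of two matroids can have $\eta$ far below the size of its largest face — already for the two partition matroids on $\{1,2,3,4\}$ with rank-one classes $\{1,2\},\{3,4\}$ and with rank-one classes $\{1,3\},\{2,4\}$, this complex is a pair of disjoint edges, hence disconnected. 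Finally, one cannot simply bootstrap from the single-matroid case (Theorem~\ref{thm:chappel}, the case $\N$ free) or from the bipartite case (Theorem~\ref{thm:bipartitie Drisko}, the case of two partition matroids), since combining two genuine matroids is exactly what is new here.

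Concretely I would work with the matroid-intersection exchange digraph $D=D_{\M,\N}(F)$ on $V$: put an arc $f\to y$ when $f\in F$, $y\notin F$ and $F-f+y\in\mathcal{I}(\M)$, an arc $y\to f$ when $F-f+y\in\mathcal{I}(\N)$, and set $S_{\M}=\{y\notin F:F+y\in\mathcal{I}(\M)\}$ and $S_{\N}=\{y\notin F:F+y\in\mathcal{I}(\N)\}$. By the augmentation axiom applied to $F$ and any $A_i$ (which is larger), every $A_i$ meets both $S_{\M}$ and $S_{\N}$, so there is no shortage of endpoints, and the standard matroid-intersection augmentation along a shortest $S_{\M}$–$S_{\N}$ dipath produces a common independent set of size $s+1$. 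The heart of the proof — and the step I expect to be the main obstacle — is to carry out such an augmentation while keeping the set rainbow: one must reroute the exchange path so that the resulting symmetric difference absorbs an element of some unused color, and when every shortest path refuses to cooperate, read off from that obstruction a long alternating walk and count the colors it meets. This is where the exact number $2n-1$ must enter, via a parity/counting argument that precludes a balanced obstruction of the type realized by $n-1$ copies each of the two perfect matchings of the $2n$-cycle, combined with the induction hypothesis applied to $2(n-1)-1$ of the $A_i$ after deleting two judiciously chosen colors. Reconciling the exchange-path surgery with the color bookkeeping is the crux; the remainder is exchange-axiom manipulation and arithmetic.
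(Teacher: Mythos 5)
Your proposal is not yet a proof: it is a reduction of the problem plus an explicit acknowledgement that the central step is missing. Everything you establish rigorously --- that the ordering condition is vacuous, that a maximum rainbow common independent set $F$ has some size $s\le n-1$, that at least $n$ colors are unused, that every $A_i$ meets both $S_{\M}$ and $S_{\N}$, and that an augmenting path in the exchange digraph yields a common independent set of size $s+1$ --- is routine. The entire difficulty is the step you yourself flag as ``the crux,'' namely producing an augmentation whose result is still \emph{rainbow}. You do not specify how the exchange path is to be rerouted, what the ``parity/counting argument'' actually is, which two colors are to be deleted before invoking the induction hypothesis, or where the quantity $2n-1$ enters. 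Since a naive augmentation along a shortest path can introduce elements of colors already used by $F$ (and the symmetric difference both adds and removes colored elements), the color bookkeeping is precisely where all exchange-based proofs of Drisko-type results concentrate their effort. As written, the load-bearing lemma is unstated and unproved, so this is a genuine gap rather than a stylistic omission.

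Two further remarks. First, the paper obtains this theorem as the special case $|A_i|=n$ of Theorem~\ref{thm:main}, and its route is exactly the topological one you dismiss. The point you miss is that one never needs to bound $\eta$ of the intersection complex of the two arbitrary matroids: one matroid ($\M$, lifted to $\M'$) is kept as the matroid in Theorem~\ref{thm:matchability}, while the complex $\mathcal{C}$ is the intersection of the \emph{other} matroid with the partition (color) matroid, and the required bound $\eta\ge\ell$ is Lemma~\ref{lem:main}, proved by deletion/contraction of circuits via Theorem~\ref{thm:eta bound}. Your two-partition-matroid example (two disjoint edges, $\eta=1$) does not refute this: with only two color classes Lemma~\ref{lem:main} promises only $\eta\ge 1$, and the requirement of $2\ell-1$ classes is exactly the Drisko doubling that makes the topological bound true. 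Second, the original Kotlar--Ziv argument is indeed an alternating-path argument in the spirit of your sketch, so your direction is viable in principle; but the content of that proof is precisely the missing color-tracking induction, not the setup you have written down. To complete your approach you would need to formulate and prove the rerouting lemma explicitly.
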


Theorem \ref{thm:chappel} follows from Theorem \ref{thm:2matroids} by the following argument. Let $V$ be the ground set of a matroid $\M$. Let $\M'$ be the matroid on the ground set $V\times [n]$ whose independent sets are those sets with elements that have distinct values in their first coordinate and whose image under projection onto the first coordinate is independent in $\M$. Let $\N$ be the matroid on $V\times [n]$ whose independent sets are those sets with elements that have distinct values in their second coordinate. Given a matrix $X$ as in Theorem \ref{thm:chappel}, define the set $A_i$ for $1\leq i\leq 2n-1$ to be the set of elements $(v,j)$ where $v=X_{ji}$. Then $A_i$ is independent in $\M'$ and $\N$ for all $i$ and thus has a partial rainbow set of size $n$ that is independent in both $\M'$ and $\N$. This corresponds to the diagonal as in the conclusion of Theorem \ref{thm:chappel}.

Theorem \ref{thm:bipartitie Drisko} also follows from Theorem \ref{thm:2matroids} by the following observation. Let $G$ be a bipartite graph with partite sets $A$ and $B$. Let $V$ be the edge set of $G$ and let $\M$ be the matroid with ground set $V$ whose independent sets are the sets of edges whose endpoints in $A$ are all distinct. Define the matroid $\N$ similarly where instead the endpoints in $B$ are considered. Then a matching of $G$ is precisely a set of edges that is independent in both $\M$ and $\N$.

Theorem \ref{thm:bipartitie Drisko} was further generalized by Aharoni, Berger, Kotlar, and Ziv in \cite{aharoni2018degree} by showing that the size of some the matchings can be reduced while the same conclusion still holds.

\begin{theorem}[Aharoni et. al. \cite{aharoni2018degree}]\label{thm:strong bip drisko}
    Let $(M_1,\dots,M_{2n-1})$ be matchings in a bipartite graph $G$ satisfying $|M_i| \geq \min(i,n)$ for all $i$. Then there is a partial rainbow matching of size $n$.
\end{theorem}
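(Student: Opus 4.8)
The plan is to prove Theorem~\ref{thm:strong bip drisko} by induction on $n$. It strengthens Theorem~\ref{thm:bipartitie Drisko} --- which is the special case $|M_i|\ge n$ for all $i$ --- by allowing the early matchings to be small, so the induction must carry the full strength of the degree condition $|M_i|\ge\min(i,n)$. One begins with the harmless reduction of replacing each $M_i$ by a sub-matching of size exactly $\min(i,n)$, which only makes the conclusion harder to obtain. The base case $n=1$ is immediate. For the inductive step, I would discard $M_1$ and $M_2$ and reindex $M'_j:=M_{j+2}$ for $1\le j\le 2n-3=2(n-1)-1$; one checks $|M'_j|=\min(j+2,n)\ge\min(j,n-1)$ throughout this range, so the inductive hypothesis applies to $M'_1,\dots,M'_{2n-3}$ and yields a rainbow matching $R$ of size $n-1$ using $n-1$ of $M_3,\dots,M_{2n-1}$. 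Hence at least $n$ of the $M_i$ go unused; since the colours $n,n+1,\dots,2n-1$ number $n$ and their matchings have size $n$, while $R$ uses only $n-1$ colours, some unused $M_c$ has $|M_c|\ge n=|R|+1$. Then $R\triangle M_c$ contains an alternating path $P$ that augments $R$: it starts and ends with edges of $M_c$ and has one more edge from $M_c$ than from $R$.

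The difficulty is that flipping $R$ along $P$ gives a matching of size $n$ in which several edges carry the colour $c$, so it need not be a rainbow matching. To get around this I would make two moves. First, choose $R$ extremally among all size-$(n-1)$ rainbow matchings --- say with lexicographically least set of colours used --- and argue that this forces a short augmenting path $P=f_0,e,f_1$ with $f_0,f_1\in M_c$ and $e\in R$. Second, exploit the surplus of unused matchings: every unused $M_{c'}$ with $|M_{c'}|>|R|$ supplies an augmenting path whose first $R$-edge lies in $R$, and if two such paths (for distinct $c\ne c'$) first meet $R$ in the \emph{same} edge $e$, then removing $e$ and inserting the initial edge of each path --- which a short incidence check shows are pairwise disjoint and meet $R$ only at $e$ --- produces a rainbow matching of size $n$, its colours being those of $R$ other than that of $e$ together with $c$ and $c'$, contradicting maximality.

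The step I expect to be the main obstacle is bridging these mechanisms in the tight regime, where $|R|=n-1$, only one large matching is unused, and its augmenting paths can be long, so that neither the pigeonhole over several paths nor an obvious extremal choice suffices. Here I would expect to need the topological input underlying Theorem~\ref{thm:bipartitie Drisko}: a lower bound on the connectivity of the matching complex of a bipartite graph, fed into a deficiency (Hall-type) theorem, with the hypothesis $|M_i|\ge\min(i,n)$ entering through the connectivity estimate. Care is required, though: applying such a theorem directly to the family $(M_1,\dots,M_{2n-1})$ inside the matching complex of $\bigcup_i M_i$ does not by itself work --- the extremal configurations for Theorem~\ref{thm:strong bip drisko} show the naive Hall condition can fail even when the conclusion holds --- so the topological ingredient would have to be interleaved with the induction, splitting off the near-extremal configurations and handling them directly.
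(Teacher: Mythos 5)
Your proposal does not close: the inductive set-up (drop $M_1,M_2$, get a rainbow matching $R$ of size $n-1$ from $M_3,\dots,M_{2n-1}$, then try to augment) is fine as far as it goes, but everything after ``$R\triangle M_c$ contains an augmenting path'' is where the theorem actually lives, and that part is a collection of hopes rather than an argument. Concretely: (i) the pigeonhole over several large unused matchings fails precisely in the tight case you identify --- $R$ may use $n-1$ of the $n$ colours $c$ with $|M_c|\ge n$, leaving exactly \emph{one} large unused matching, and the small unused matchings $M_1,M_2,\dots$ are useless for augmentation since they can have fewer than $|R|$ edges; (ii) even with two large unused colours $c\ne c'$, the two first edges $f_0,f_0'$ of their augmenting paths need not be disjoint (both can attach to the same endpoint of $e$, or share their free vertex), so the claimed ``short incidence check'' is false as stated; (iii) no justification is offered for why an extremal choice of $R$ forces a length-three augmenting path. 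You acknowledge all of this and defer to ``the topological input underlying Theorem~\ref{thm:bipartitie Drisko} \dots interleaved with the induction,'' which is exactly the content that is missing.

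For comparison, the paper does not prove Theorem~\ref{thm:strong bip drisko} by augmenting paths at all: it is obtained as the special case of Theorem~\ref{thm:main} in which $\M$ and $\N$ are the two partition matroids recording the endpoints of an edge in the two sides of $G$ (the translation spelled out after Theorem~\ref{thm:2matroids}). The degree condition $|M_i|\ge\min(i,n)$ is not fed into a combinatorial augmentation step; it enters as the rank hypothesis $\rho_\N(X_{i_j})\ge\min(j,\ell)$ of Lemma~\ref{lem:main}, whose induction (on $\ell$, via deletion/contraction of circuits and Theorem~\ref{thm:eta bound}) produces the connectivity bound $\eta\ge\ell$ that Theorem~\ref{thm:matchability} converts into the desired basis. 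If you want a purely combinatorial proof you would need a genuinely new idea for the one-large-unused-matching case; otherwise the honest route is the topological one, and then there is no reason to restrict to bipartite graphs rather than proving the matroid statement directly.
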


Additionally, it was shown in \cite{aharoni2018degree} that Theorem \ref{thm:strong bip drisko} is tight in the following sense. If the matchings $M_1,\dots,M_{2n-1}$ are ordered in nondecreasing order in terms of their size, then the conclusion of Theorem \ref{thm:strong bip drisko} does not hold in general if $|M_i| < \min(i,n)$ for some $i$.

The main theorem of this paper is a common generalization of Theorems \ref{thm:2matroids} and \ref{thm:strong bip drisko}.

\begin{theorem}[Main theorem]\label{thm:main}
    Let $\M$ and $\N$ be two matroids on the same ground set $V$. Let $A_1,\dots,A_{2n-1}$ be sets which are independent in both $\M$ and $\N$, satisfying $|A_i|\geq \min(i,n)$ for all $i$. Then there exists a partial rainbow set of size $n$, which is independent in both $\M$ and $\N$.
\end{theorem}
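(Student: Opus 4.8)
The plan is to convert the theorem into a connectivity estimate for a simplicial complex and feed it into a defect version of the topological Hall theorem of Aharoni and Haxell. For a finite non-void simplicial complex $C$ write $\eta(C)\ge k$ to mean that $C$ is $(k-2)$-connected; thus $\eta(C)\ge 1$ says $C$ has a vertex, $\eta(C)\ge 2$ says $C$ is connected, and $\eta(\{\emptyset\})=0$. The defect topological Hall theorem says: if $V_1,\dots,V_m$ are vertex sets of $C$ and $\eta\bigl(C[\bigcup_{i\in I}V_i]\bigr)\ge |I|-d$ for every $I\subseteq[m]$, then some $m-d$ of the sets $V_i$ have a system of distinct representatives forming a face of $C$. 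I would use this with $C:=\I(\M)\cap\I(\N)$ (the complex of sets independent in both matroids), $V_i:=A_i$, $m:=2n-1$, and $d:=n-1$; a face of $C$ representing $n$ of the $A_i$ is exactly a partial rainbow common independent set of size $n$. So it suffices to prove
\begin{equation*}
  \eta\Bigl(\bigl(\I(\M)\cap\I(\N)\bigr)\bigl[\textstyle\bigcup_{i\in I}A_i\bigr]\Bigr)\ \ge\ |I|-n+1\qquad\text{for every }I\subseteq[2n-1].\tag{$\star$}
\end{equation*}

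To prove $(\star)$, fix $I=\{i_1<\dots<i_k\}$ and set $B_j:=A_{i_j}$. Since $i_j\ge j$, the hypothesis gives $|B_j|\ge\min(i_j,n)\ge\min(j,n)$, each $B_j$ is independent in both matroids, and $k=|I|\le 2n-1$. So $(\star)$ reduces to the following self-contained statement, which I call the Claim: \emph{if $B_1,\dots,B_k$ with $k\le 2n-1$ are independent in both $\M$ and $\N$ and satisfy $|B_j|\ge\min(j,n)$ for all $j$, then $\eta\bigl((\I(\M)\cap\I(\N))[\bigcup_jB_j]\bigr)\ge k-n+1$.}

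I would prove the Claim by induction, removing one ground-set element $v$ at a time and using the basic inequality $\eta(C)\ge\min\bigl(\eta(C\setminus v),\,\eta(\operatorname{lk}(v,C))+1\bigr)$ together with the identifications $C\setminus v=(\I(\M\setminus v)\cap\I(\N\setminus v))[\,\cdot\,]$ and $\operatorname{lk}(v,C)=(\I(\M/v)\cap\I(\N/v))[\,\cdot\,]$ coming from deletion and contraction of $v$. The aim is to choose $v$ so that deletion keeps the size demands essentially intact (at worst one $B_j$ is disturbed) while contraction lowers $n$ by one and shortens the demand list to match, so that the inductive hypothesis applies to both $C\setminus v$ and $\operatorname{lk}(v,C)$ and the two bounds combine to $k-n+1$. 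I expect the main obstacle to be making this bookkeeping actually close: it will probably force a more elastic version of the Claim — an arbitrary nondecreasing demand sequence in place of $(\min(j,n))_j$, together with $\operatorname{rank}_{\M}(\bigcup_jB_j)$ and $\operatorname{rank}_{\N}(\bigcup_jB_j)$ carried as separate parameters — so that the recursion has room. It is worth seeing why two matroids is genuinely harder than one: for a single matroid $\M$ the complex in the Claim is $\I\bigl(\M|_{\bigcup_jB_j}\bigr)$, whose $\eta$ equals its rank, which is at least $|B_k|\ge\min(k,n)\ge k-n+1$ (here $k\le 2n-1$ is used) — essentially for free, giving Theorem \ref{thm:chappel} and its degree analogue; but $\I(\M)\cap\I(\N)$ need not be as connected as its dimension (for two ``crossing'' rank-$2$ partition matroids on four points it is a pair of disjoint edges, so $\eta=1$ although it contains a common independent set of size $2$), so the Claim genuinely requires work.

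Finally one checks the small cases of $(\star)$ by hand: for $|I|\le n-1$ the right side is $\le 0$ and there is nothing to prove, and for $|I|=n$ the inequality only asks that $(\I(\M)\cap\I(\N))[\bigcup_{i\in I}A_i]$ have a vertex, which holds since $A_{i_1}$ is nonempty and, being independent in both matroids, contains no loop of either. With $(\star)$ in hand for every $I$, the defect topological Hall theorem produces a partial rainbow common independent set of size $n$, which is Theorem \ref{thm:main}.
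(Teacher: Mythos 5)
Your reduction to the estimate $(\star)$ is where the argument breaks: the Claim is false, and in fact your own illustrative example refutes it. Take $V=\{a,b,c,d\}$, let $\M$ be the partition matroid with parts $\{a,b\},\{c,d\}$ and $\N$ the partition matroid with parts $\{a,c\},\{b,d\}$. The complex $\I(\M)\cap\I(\N)$ is exactly the two disjoint edges $\{a,d\}$ and $\{b,c\}$ (every other pair is dependent in one of the two matroids), so it is disconnected and $\eta=1$. Now take $n=2$, $k=3$, $B_1=\{a\}$, $B_2=\{a,d\}$, $B_3=\{b,c\}$: each $B_j$ is independent in both matroids and $|B_j|\ge\min(j,2)$, yet the Claim demands $\eta\bigl((\I(\M)\cap\I(\N))[\{a,b,c,d\}]\bigr)\ge k-n+1=2$, i.e.\ connectivity, which fails. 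So no amount of bookkeeping in the deletion/contraction induction can close the argument: the statement being inducted on is not true. (Theorem \ref{thm:main} itself holds here --- $\{a,d\}$ is a rainbow set from $A_1,A_2$ --- which shows the information lost in your reduction is essential.)

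The second, related problem is that the defect topological Hall theorem you invoke is only valid for \emph{disjoint} vertex sets $V_i$, whereas the $A_i$ overlap. (For overlapping sets the statement fails already for $C=\{\emptyset,\{v\}\}$ and $V_1=V_2=\{v\}$, $d=0$: every restriction is contractible, but there is no SDR of size $2$.) The correct fix is to disjointify, i.e.\ to work on the ground set of pairs $(x,i)$ with $x\in A_i$ --- and once you do that, the complex whose $\eta$ must be bounded is no longer $\I(\M)\cap\I(\N)$ on $V$ but a complex on the pairs that carries the rainbow (partition) constraint. This is precisely the route the paper takes: it encodes $\M$ through the lifted matroid $\M'$ on the pairs and applies the Aharoni--Berger matroid--complex intersection theorem (Theorem \ref{thm:matchability}) rather than a partition-matroid Hall theorem, which reduces everything to a lower bound on $\eta$ of the intersection of a \emph{partition} matroid with the single matroid $\N$ (Lemma \ref{lem:main}); that lemma is then proved by the deletion/contraction induction on $\eta$ (Theorem \ref{thm:eta bound}) applied to circuits through a fixed non-loop. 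Your deletion/contraction instinct is the right engine, but it must be run on the disjointified, partition-constrained complex, not on $\I(\M)\cap\I(\N)$.
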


\section{Preliminaries}\label{preliminaries}

In this section, we introduce the necessary background and supporting results needed to prove Theorem \ref{thm:main}. A hypergraph $H=(V,E)$ consists of a \textit{ground set} $V$ (for our purposes $V$ will be finite) and an \textit{edge set} $E$ consisting of subsets of $V$. A \textit{matroid} is a hypergraph whose edge set $E$ satisfies the following properties:
\begin{enumerate}
    \item $\emptyset \in E$,
    \item If $T\in E$ and $S\subseteq T$, then $S\in E$,
    \item If $S,T\in E$ and $|S| < |T|$, then there exists $x\in T\setminus S$ such that $S\cup \{x\} \in E$. 
\end{enumerate}
The \textit{rank} in $\M$ of a set $S\subseteq V$ is the size of the largest edge of $\M$ contained in $S$, and we denote it as $\rho_\M(S)$. The rank of $\M$, denoted $\rho(\M)$, is defined to be $\rho_\M(V)$. A \textit{basis} of $\M$ is an edge $e$ of $\M$ such that $|e|$ is equal to the rank of $\M$. A \textit{flat} $F$ of $\M$ is a subset of $V$ satisfying $\rho_\M(F\cup\{x\}) > \rho_\M(F)$ for all $x\in V\setminus F$. A \textit{loop} is an element $x\in V$ such that $\{x\}\notin E$.  The set of \textit{circuits} of $\M$, which we denote $\Circ(\M)$, are the minimal sets (with respect to inclusion) that are not an edge of $\M$. A \textit{coloop} is an element that does not belong to any circuit. 

Note that we may also view $\Circ(\M)$ as a hypergraph with ground set $V$. We may abuse notations by writing $\Circ(\M)$ instead of $(V,\Circ(\M))$. A well-known property of matroids is that they satisfy the \textit{circuit elimination axiom}.
\begin{itemize}
    \item(circuit elimination axiom) For any two circuits $C_1$ and $C_2$ with $x\in C_1\cap C_2$ and $y\in C_1\setminus C_2$, there exists a circuit $C_3$ such that $C_3\subseteq C_1\cup C_2$ and $x\notin C_3$ and $y\in C_3$.
\end{itemize}
See \cite{oxley2011} for a more detailed account of matroids.

For a general hypergraph $H=(V,E)$, we call a set $S\subseteq V$ independent if there is no edge $e\in E$ such that $e\subseteq S$. We denote by $\I(H)$ the hypergraph on the same ground set whose edges are the independent sets of $H$. Note that for a matroid $\M$ we have $\M=\I(\Circ(\M))$. For an edge $e\in E$, we define 
\[
H - e= (V,E\setminus\{e\}).
\] 
For a set $S\subseteq V$, we define
\[
H/S = (V\setminus S, \{e\setminus S \mid e\in E,\, e \not\subseteq S\})
\]
For two hypergraphs $H_1=(V,E_1), H_2=(V,E_2)$ on the same ground set $V$, we denote $H_1\cap H_2$ as the hypergraph $(V,E_1\cap E_2)$. For a matroid $\M$, we define $\M/S = \I(\Circ(\M)/S)$. Note that $\M/S$ coincides with the usual notion of contraction in matroids, and hence is itself a matroid.

We will also require some topological preliminaries. Homology groups will be taken with $\mathbb{Q}$ coefficients. A \textit{simplicial complex} is a collection of finite sets that is closed under taking subsets, and a set of the simplicial complex may be called a simplex. For a simplicial complex $\mathcal{C}$, $\eta(\mathcal{C})$ is the smallest integer $k$ such that the reduced homology $\tilde{H}_{k-1}(\mathcal{C})$ is not zero. If $\mathcal{C}$ is empty we take $\eta(\mathcal{C})=0$ and if all homology groups of $\mathcal{C}$ vanish, then we take $\eta(\mathcal{C})=\infty$.

The following result will allow us to obtain lower bounds on $\eta$ and is key to the proof of the main theorem.

\begin{theorem}\label{thm:eta bound}
    Let $H$ be a hypergraph, and let $e$ be an edge that does not contain another edge. Then
    \[
    \eta(\I(H)) \geq \min\Big(\eta\big(\I(H-e)\big),\eta\big(I(H/e)\big)+|e|-1\Big).
    \]
\end{theorem}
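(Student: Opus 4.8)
The plan is to compare the three independence complexes $\I(H)$, $\I(H-e)$, and $\I(H/e)$ by means of a single Mayer--Vietoris decomposition and read off the bound on $\eta$. Write $k=|e|$. We may assume $k\geq 1$: if $e=\emptyset$ then $\I(H)$ is void, $\eta(\I(H))=0$, and the right-hand side is $\leq -1$, so the inequality is trivial. Since $e$ is an edge containing no other edge and $e\neq\emptyset$, the only edge of $H$ that is a subset of $e$ is $e$ itself; in particular $\emptyset\notin E$, so $\I(H)\subseteq\I(H-e)$ with $\emptyset$ a face of each, and (inspecting the edge set of $H/e$) $\emptyset$ is never an edge of $H/e$, so $\I(H/e)$ is not void. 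Recalling that $\eta(\mathcal{C})\geq m$ is equivalent to $\tilde H_j(\mathcal{C})=0$ for all $j\leq m-2$, it suffices to prove $\tilde H_j(\I(H))=0$ whenever $j\leq\min\bigl(\eta(\I(H-e)),\,\eta(\I(H/e))+k-1\bigr)-2$.

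Next I set up a combinatorial dictionary and the decomposition. First, a face of $\I(H-e)$ that does not contain $e$ is a face of $\I(H)$, since it contains no edge of $H-e$ and the only other possibility, $e$, is excluded. Second, for $\tau\subseteq V\setminus e$ one has $\tau\in\I(H/e)$ if and only if $e\cup\tau$ contains no edge of $H$ other than $e$, equivalently $e\cup\tau\in\I(H-e)$; this is precisely where the hypothesis on $e$ enters, through the description of the edges of $H/e$. Now let $S:=\{\rho\in\I(H-e):\rho\cup e\in\I(H-e)\}$ be the closed star of the face $e$ in $\I(H-e)$: it is a subcomplex, and it is a cone with apex any fixed $v\in e$, hence contractible. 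I claim
\[
\I(H-e)=\I(H)\cup S,\qquad \I(H)\cap S=(\partial\bar e)\ast\I(H/e),
\]
where $\bar e$ is the full simplex on vertex set $e$, $\partial\bar e$ its boundary (a $(k-2)$-sphere), and $\ast$ is the simplicial join, with $\I(H/e)$ viewed as a complex on $V\setminus e$. The first identity is immediate from the dictionary: every face of $\I(H-e)$ either contains $e$, hence lies in $S$, or does not, hence lies in $\I(H)$, while conversely $\I(H),S\subseteq\I(H-e)$. For the second, a face $\rho\in\I(H)$ lies in $S$ iff $e\cup\rho\in\I(H-e)$ iff $\rho\setminus e\in\I(H/e)$; and given $\rho\setminus e\in\I(H/e)$, the only edge of $H$ inside $e\cup\rho$ is $e$, so $\rho$ is independent in $H$ iff $e\not\subseteq\rho$, i.e.\ iff $\rho\cap e$ is a \emph{proper} subset of $e$ — exactly the condition describing a face of $(\partial\bar e)\ast\I(H/e)$.

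It remains to compute and apply Mayer--Vietoris. Since $\partial\bar e$ is homotopy equivalent to $S^{k-2}$, the Künneth formula for joins over $\mathbb{Q}$, $\tilde H_n(A\ast B)\cong\bigoplus_{i+j=n-1}\tilde H_i(A)\otimes\tilde H_j(B)$, gives $\tilde H_n\bigl((\partial\bar e)\ast\I(H/e)\bigr)\cong\tilde H_{n-k+1}(\I(H/e))$ for all $n$, hence $\eta\bigl((\partial\bar e)\ast\I(H/e)\bigr)=\eta(\I(H/e))+k-1$. The reduced Mayer--Vietoris sequence of the cover $\I(H-e)=\I(H)\cup S$ (whose intersection is nonvoid, as it contains $\emptyset$) contains for each $j$ the exact stretch
\[
\tilde H_j\bigl(\I(H)\cap S\bigr)\longrightarrow\tilde H_j(\I(H))\oplus\tilde H_j(S)\longrightarrow\tilde H_j(\I(H-e)).
\]
As $S$ is contractible, $\tilde H_j(S)=0$, so exactness forces $\tilde H_j(\I(H))=0$ whenever $\tilde H_j(\I(H)\cap S)=0$ and $\tilde H_j(\I(H-e))=0$. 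The first vanishing holds for $j\leq\eta(\I(H/e))+k-1-2$ and the second for $j\leq\eta(\I(H-e))-2$, so both hold for every $j\leq\min\bigl(\eta(\I(H-e)),\,\eta(\I(H/e))+k-1\bigr)-2$, which is what we wanted.

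I expect the main obstacle to be the two set-theoretic identities of the decomposition: one must insert the hypothesis "$e$ contains no other edge'' in exactly the right places and, more delicately, keep the reduced-homology conventions straight in negative degrees — that $\partial\bar e$ is the $(-1)$-sphere when $k=1$, that the Künneth shift behaves correctly then, and that the degenerate cases in which $\I(H)$, $\I(H-e)$, or $\I(H/e)$ is void or equals $\{\emptyset\}$ are consistent with the stated definition of $\eta$.
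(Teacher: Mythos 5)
Your argument is correct and is precisely the Mayer--Vietoris proof of Meshulam that the paper cites for this theorem rather than reproving: decompose $\I(H-e)$ as $\I(H)$ together with the contractible closed star of $e$, identify the intersection as $(\partial\bar e)\ast\I(H/e)$ using the hypothesis that $e$ contains no other edge, and read off the bound from exactness of the reduced sequence. The only flaw is the aside on $e=\emptyset$: there $H/\emptyset=H-\emptyset$, so the right-hand side equals $\eta(\I(H-e))-1$, which need not be $\leq -1$ (it is $\infty$ if, say, $E=\{\emptyset\}$ and $V\neq\emptyset$), and under the stated convention that the void complex has $\eta=0$ the inequality can then actually fail; the correct resolution is that the theorem implicitly assumes $e\neq\emptyset$, which is the only case the paper ever uses.
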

Theorem \ref{thm: drisko} was proven by Meshulam in \cite{meshulam2003domination} for graphs, but the proof holds for general hypergraphs as well. It was used implicitly in \cite{aharoni2008acyclic} and explicitly in \cite{berger2024coloring}.

Another result that is necessary for our proof is the following theorem of Aharoni and Berger that provides a necessary condition for the existence of a set that is both a basis of a matroid and an element of a simplicial complex both with the same ground set $V$. For a matroid $\M$ and a set $S\subseteq V$, we write $\M.S$ for the matroid consisting of the sets $e\subseteq S$ such that $e\cup f$ is an edge of $\M$ for all edges $f$ with $f\cap S= \emptyset$. For a simplicial complex $\mathcal{C}$ and a set $S$ contained in its ground set, we write $\mathcal{C} | S$ for the simplicial complex given by the collection of sets of $\mathcal{C}$ contained in $S$.

\begin{theorem}[Aharoni-Berger \cite{aharoni2006intersection}]\label{thm:matchability}
    Let $\M$ be a matroid and $\mathcal{C}$ be a simplicial complex with the same ground set $V$. If $\eta(\mathcal{C} | S) \geq \rho(\M.S)$ for all $S\subseteq V$, then there is a basis of $\M$ in the edge set of $\M\cap \mathcal{C}$. In fact, it suffices to assume the condition holds for sets $S$ that are complements of a flat of $\M$.
\end{theorem}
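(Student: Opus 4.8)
The plan is to prove the quantitative lower bound
\[
\nu(\M\cap\mathcal{C})\ \ge\ \min_{S\subseteq V}\Bigl(\rho_\M(S)+\eta\bigl(\mathcal{C}|(V\setminus S)\bigr)\Bigr),
\]
where $\nu(\M\cap\mathcal{C})$ denotes the largest size of a set that is independent in $\M$ and a face of $\mathcal{C}$, and then to read off Theorem~\ref{thm:matchability}: the hypothesis applied to $V\setminus S$ gives $\eta(\mathcal{C}|(V\setminus S))\ge\rho(\M.(V\setminus S))=\rho(\M)-\rho_\M(S)$, so the right-hand side above is at least $\rho(\M)$, and a common independent set of size $\rho(\M)$ is a basis of $\M$ lying in $\mathcal{C}$. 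This inequality is the common generalization of Edmonds' matroid intersection theorem (take $\mathcal{C}$ a matroid complex, so that $\eta$ of a restriction equals its rank) and of the topological Hall theorem of Meshulam and Aharoni--Berger--Ziv (take $\M$ a partition matroid, so that it asserts the existence of an independent system of representatives). Thus I expect a matroid-intersection style induction in which the topological Hall theorem---or, equivalently, the connectivity recursion underlying it---supplies the only new ingredient beyond the classical argument.

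Concretely I would induct on $|V|$. Loops of $\M$ lie in no basis and can be deleted; if a vertex $v$ is a loop of $\mathcal{C}$ (that is, $\{v\}\notin\mathcal{C}$) then $v$ lies in no common independent set and we may pass to $\M\setminus v$ on $V\setminus\{v\}$, checking that the ``$\min_S$'' quantity does not decrease. For any remaining vertex $v$, every face of $\M\cap\mathcal{C}$ either avoids $v$ or contains it, whence
\[
\nu(\M\cap\mathcal{C})=\max\Bigl(\nu\bigl((\M\setminus v)\cap(\mathcal{C}|(V\setminus\{v\}))\bigr),\ 1+\nu\bigl((\M/v)\cap\mathrm{lk}_\mathcal{C}(v)\bigr)\Bigr),
\]
and applying the induction hypothesis to each pair, it suffices to choose $v$ so that one of the two resulting ``$\min_S$'' quantities is at least $m:=\min_{S\subseteq V}(\rho_\M(S)+\eta(\mathcal{C}|(V\setminus S)))$. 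The rank-function bookkeeping for $\M\setminus v$ and $\M/v$ is routine; the topological content is carried by a Meshulam-type connectivity recursion in the same spirit as Theorem~\ref{thm:eta bound}, namely $\eta(\mathcal{C}|W)\ge\min(\eta(\mathcal{C}|(W\setminus\{v\})),\ \eta(\mathrm{lk}_\mathcal{C}(v)|(W\setminus\{v\}))+1)$ for $v\in W$, which one proves by Mayer--Vietoris from the cover of $\mathcal{C}|W$ by its deletion and the (contractible) star of $v$. As a sanity check, when $\M$ is transversal with presentation $(B_j)_j$ the statement already follows directly from topological Hall, since the hypothesis applied to $S=\bigcup_{j\in I}B_j$ yields $\eta(\mathcal{C}|S)\ge\rho(\M.S)\ge|I|$.

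The step that carries the real difficulty is the inductive case of a vertex $v$ that is neither a loop nor a coloop of $\M$. One would like to imitate Edmonds' uncrossing: assuming both child quantities are below $m$, extract witnessing sets $S_1,S_2$ and merge them into a single $S$ contradicting $\rho_\M(S)+\eta(\mathcal{C}|(V\setminus S))\ge m$. But $\eta(\mathcal{C}|\cdot)$ is not submodular over restrictions, so ``take $S_1\cap S_2$ together with $S_1\cup S_2$'' does not close the argument as it does in the purely matroidal setting. This is precisely why the criterion needs to be tested only on complements of flats, and why $v$ should be selected with reference to the circuit structure of $\M$---for instance, by passing to a minimal counterexample and letting $v$ be an element of a circuit, so that contracting $v$ drops the rank in a controlled way and the relevant restrictions of $\mathcal{C}$ are exactly those the hypothesis constrains. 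Making the interaction between the flats of $\M$ and the connectivity recursion line up is the crux; once that is arranged, the rest is standard.
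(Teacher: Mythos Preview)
The paper does not prove Theorem~\ref{thm:matchability}; it is quoted from \cite{aharoni2006intersection} as a black box and then applied in the proof of Theorem~\ref{thm:main}. So there is no ``paper's own proof'' to compare against.

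Evaluating your proposal on its own terms: the reformulation as the inequality
\[
\nu(\M\cap\mathcal{C})\ \ge\ \min_{S\subseteq V}\Bigl(\rho_\M(S)+\eta\bigl(\mathcal{C}|(V\setminus S)\bigr)\Bigr)
\]
is exactly the main theorem of \cite{aharoni2006intersection}, and your deduction of the stated form from it is correct, including the observation that one need only test complements of flats. Your deletion--contraction recursion for $\nu$ and the Mayer--Vietoris inequality for $\eta$ are also correct.

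However, the proposal is not a proof: you explicitly stop at the hard step. You correctly observe that the Edmonds-style uncrossing fails because $\eta(\mathcal{C}|\cdot)$ is not submodular, and you gesture at ``selecting $v$ with reference to the circuit structure'' and ``making the flats and the connectivity recursion line up,'' but you do not say how. That interaction is the entire content of the theorem; the argument in \cite{aharoni2006intersection} is substantially more elaborate than a choice of which vertex to delete or contract, and no straightforward minimal-counterexample reduction of the kind you sketch is known to close the gap. As written, your proposal is an accurate identification of where the difficulty lies, together with the easy parts of the argument, but it leaves the crux unresolved.
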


\section{Proof of Theorem \ref{thm:main}}
We are now ready to prove the main theorem. We will need Lemma \ref{lem:main} below to provide lower bounds on $\eta$, which will allow us to later apply Theorem \ref{thm:matchability}. Given sets $X_1\dot\cup\cdots \dot\cup X_m$ ($\dot\cup$ denotes disjoint union), the \textit{partition matroid} $P$ is the matroid whose edges are all partial rainbow sets of the $X_i$. 
\begin{lemma}\label{lem:main}
    Let $\ell\geq 1$ and $m \geq 2\ell-1$ be integers. Let $P$ be the partition matroid on $X_1\dot\cup \cdots \dot\cup X_{m}$. Let $\N$ be a matroid on $V \supseteq X_1\cup \cdots \cup X_{m}$, and assume that there are indices $i_1,\dots,i_{2\ell-1}$ such that $\rho_\N(X_{i_j})\geq \min(j,\ell)$. Then $\eta(P\cap \N)\geq \ell$.
\end{lemma}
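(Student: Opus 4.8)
The plan is to induct on $\ell$ (or equivalently on $m+\ell$) using Theorem \ref{thm:eta bound} as the engine, peeling off one edge of a circuit at a time. For the base case $\ell=1$, the hypothesis gives an index $i_1$ with $\rho_\N(X_{i_1})\geq 1$, so $X_{i_1}$ contains a non-loop element $v$ of $\N$; then $\{v\}$ is an edge of $P\cap\N$, so $P\cap\N$ is nonempty and $\eta(P\cap\N)\geq 1$. For the inductive step, I would write $P\cap\N = \I(H)$ where $H$ is the hypergraph on $V$ whose edge set is $\Circ(P)\cup\Circ(\N)$ (so that $\I(H)$ is exactly the common independent sets). To apply Theorem \ref{thm:eta bound} I need a minimal circuit $e$; the natural choice is a circuit of the partition matroid $P$, i.e.\ a pair $e=\{x,y\}$ with $x,y\in X_{i}$ for some block $i$ — provided some block among $X_{i_1},\dots,X_{i_{2\ell-1}}$ has size at least $2$. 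Then $|e|=2$, and Theorem \ref{thm:eta bound} gives
\[
\eta(P\cap\N)\;\geq\;\min\bigl(\eta(\I(H-e)),\,\eta(\I(H/e))+1\bigr).
\]
Here $\I(H-e)$ corresponds to deleting the circuit $\{x,y\}$ from $P$, i.e.\ replacing $X_i$ by two separate blocks $X_i\setminus\{y\}$ and $\{y\}$, giving a partition matroid on $m+1$ blocks; and $\I(H/e)$ corresponds to contracting: we delete both $x$ and $y$ from the ground set of $P$ (and from $V$), leaving a partition matroid on $m-1$ blocks intersected with $\N/e$.

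The heart of the argument is bookkeeping the hypothesis "$\rho_\N(X_{i_j})\geq\min(j,\ell)$ for $2\ell-1$ indices'' through these two operations so that the inductive hypothesis applies to each. For the deletion $H-e$: the new block structure has at least $2\ell-1$ blocks with $\N$-rank meeting the profile $\min(j,\ell)$ (the block $X_i$ is split, but one can keep using whichever half still carries enough rank, and in the worst case one may need $m\geq 2\ell-1$ with room to spare, which is why the hypothesis allows $m$ strictly larger than $2\ell-1$); thus $\eta(\I(H-e))\geq\ell$ by induction on $m$ with $\ell$ fixed. For the contraction $H/e$: we lose two blocks and want to land in the case $\ell-1$, needing $m-1\geq 2(\ell-1)-1=2\ell-3$ indices $i_j$ with $\rho_{\N/e}(X_{i_j})\geq\min(j,\ell-1)$. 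Contracting can drop each rank by at most one, and two of the original blocks (the ones in which $x$ and $y$ sit — here $x,y$ are in the same block, but more generally after reindexing) may be destroyed; carefully reshuffling the surviving indices and re-indexing $j\mapsto j-2$ should produce the required rank profile for $\ell-1$, giving $\eta(\I(H/e))\geq\ell-1$ and hence $\eta(\I(H/e))+1\geq\ell$. Combining, $\eta(P\cap\N)\geq\ell$.

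The one remaining case is when \emph{no} block $X_{i_j}$ has two elements, i.e.\ every relevant block is a singleton; then $|X_{i_j}|=1$ forces $\rho_\N(X_{i_j})\leq 1$, so the rank hypothesis $\rho_\N(X_{i_j})\geq\min(j,\ell)$ can only hold for $j=1$ once $\ell\geq 2$ — this situation simply does not arise for $\ell\geq 2$, and $\ell=1$ is the base case, so there is nothing to check. Alternatively, in this singleton regime one can instead pick $e$ to be a \emph{circuit of $\N$} contained in $X_{i_1}\cup\cdots\cup X_{i_{2\ell-1}}$ and run the same deletion/contraction dichotomy; since an $\N$-circuit $e$ has $|e|\geq 2$, the bound $\eta(\I(H/e))+|e|-1$ still gives the needed slack. \textbf{The main obstacle} I anticipate is precisely this combinatorial accounting: verifying that after deleting or contracting an edge, the "staircase'' rank condition $\rho(X_{i_j})\geq\min(j,\ell)$ survives (after re-indexing) with the correct parameters, including handling the boundary indices $j$ near $\ell$ and the case where a contracted element lies in one of the designated blocks versus an undesignated one. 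Everything else — the nonemptiness base case and the mechanical application of Theorem \ref{thm:eta bound} — is routine.
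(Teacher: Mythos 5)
Your setup (writing $P\cap\N=\I(H)$ for the hypergraph $H$ whose edges are the $\N$-circuits together with the within-block pairs, and driving an induction with Theorem \ref{thm:eta bound}) is the same as the paper's, but the single-step deletion/contraction dichotomy on one pair $e=\{x,y\}$ has two genuine gaps. First, the deletion branch: $H-e$ removes only the one edge $\{x,y\}$, so independent sets of $H-e$ may still be forbidden from containing $\{y,z\}$ for other $z\in X_i$; this is \emph{not} the circuit hypergraph of the partition matroid obtained by splitting $X_i$ into $X_i\setminus\{y\}$ and $\{y\}$ (that would require deleting every pair $\{y,z\}$, $z\in X_i$). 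So $\I(H-e)$ is not an instance of the lemma and the inductive hypothesis does not apply to it. Second, and more fatally, the contraction branch does not close arithmetically: contracting $\{x,y\}$ with $x,y$ in one block can have $\rho_\N(\{x,y\})=2$, so each $\rho_{\N/e}(X_{i_j})$ may drop by $2$, while Theorem \ref{thm:eta bound} only credits you $|e|-1=1$. Tracking your own reindexing, the surviving blocks satisfy only $\rho\geq\min(j',\ell-2)$ where you need $\min(j',\ell-1)$, so you cannot conclude $\eta(\I(H/e))\geq\ell-1$.

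The paper's proof is engineered precisely to avoid both problems. It fixes a single non-loop $x\in X_1$ and processes \emph{all} edges of $H$ through $x$ in sequence, first the $\N$-circuits $C_1,\dots,C_r$ and then the pairs $\{x,y\}$ with $y\in X_1$, choosing delete or contract at each step according to which side of the bound in Theorem \ref{thm:eta bound} is attained. If every edge through $x$ gets deleted, $x$ is isolated, the complex is a cone, and $\eta=\infty$ (this is how the deletion branch terminates without ever needing an inductive hypothesis for a deleted hypergraph). If the process stops at a contraction, the circuit elimination axiom shows the earlier deletions are immaterial, i.e.\ $\I(H_i)=\I(H/C_i)$. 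Crucially, contracting an $\N$-circuit $C_i$ gains $|C_i|-1=\rho_\N(C_i)=k$ in the $\eta$ bound while costing at most $k$ in each block's rank, so the staircase survives with $\ell$ replaced by $\ell-k$; and the pair $\{x,y\}$ is only ever contracted after all $\N$-circuits through $x$ have been deleted, at which point $x$ is a coloop of the modified matroid $\N'$ and the contraction costs only $1$ rank, matching the gain of $1$. You would need to restructure your argument along these lines (a per-vertex sweep plus the circuit-elimination identification), not merely refine the bookkeeping, for the induction to go through.
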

\begin{proof}
We proceed by induction on $\ell$; for the base case $\ell = 1$ we only need to show that there exists some non-empty set in $P\cap \N$, and this follows from the condition $\rho_\N(X_{i_1})\geq 1$.   

We now need to show the induction step. 
For notational purposes, we will assume without loss of generality that $i_j=j$. Furthermore,  by replacing $\N$ with $\N|(X_1\cup \cdots \cup X_m)$ if necessary, we can assume that $X_1\cup\cdots\cup X_m = V$.

Let $H$ be the hypergraph on the ground set $V$ whose edges are the circuits of $\N$ and the 2-element subsets $\{x,y\}$ where $x,y\in X_i$ for some $i$. Note that $P\cap \N = \I(H)$.

Let $x\in X_1$ be a non-loop. The existence of such a non-loop follows from the assumption $\rho_\N(X_1)\geq 1$. Let $C_1,\dots,C_r$ be the circuits of $\N$ that contain $x$ in any order. We apply Theorem \ref{thm:eta bound} iteratively to $H$ with the edges $C_1,\dots,C_r$ one at a time. Let $H_0=H$, and for $i\geq 1$, let $H_i=H_{i-1}-C_i$ or $H_i=H_{i-1}/C_i$ depending on which one provides a lower bound for $\eta(\I(H_{i-1}))$ as in Theorem \ref{thm:eta bound}. If $H_i=H_{i-1}/C_i$ for some $i$ then we stop the process. Assume first that there exists some $i$ such that $H_i= H_{i-1}/C_i$. We claim that $\I(H_i)=\I(H/C_i)$. It is clear that $\I(H_i)\supseteq \I(H/C_i)$ since the edge set of $H_i$ is contained in the edge set of $H/C_i$. If there exists an edge $e \in \I(H_i)\setminus \I(H/C_i)$, then $e$ contains a subset of the form $C_j\setminus C_i$ for some $j<i$. However, by the circuit elimination property, there exists a circuit $C'$ such that $x\notin C'$, $C'\subseteq C_j\cup C_i$, and $C'\not\subseteq C_i$. Hence, $C'\setminus C_i\subseteq C_j\setminus C_i$, contradicting the fact that $e \in \I(H_i)$. Therefore, $\eta(\I(H))\geq \eta(\I(H_i)) + |C_i| -1 = \eta(\I(H/C_i)) + |C_i| -1$. Let $k=\rho_\N(C_i)$. If $k\geq \ell$, then $|C_i|\geq \ell +1$ and we are done. Otherwise, for the indices $j$ for which $X_j\cap C_i=\emptyset$ we have that $\rho_{\N/C_i}(X_j)\geq j-k$ for all $j$. In particular, there are $2(\ell-k)-1$ indices $i_1,\dots,i_{2(\ell-k)-1}$ such that $\rho_{\N/C_i}(X_{i_j})\geq j-k$ for all $j$. Thus, by induction applied to $\N/C_i$ and the $X_{j}$ with $X_j\cap C_i=\emptyset$, we have $\eta(\I(H/ C_i))\geq \ell-k$, which implies that $\eta(\I(H))\geq \ell$.

Now, assume instead that $H_i = H_{i-1}-C_i$ for all $i$, and let $H'=H_r$. Let $e_1,\dots,e_s$ be the edges of the form $\{x,y\}$ where $y\neq x$ and $y\in X_1$ in any order. Define $H_i'$ for $0\leq i\leq s$ in a similar way as before. If $H_i' = H_{i-1}'-e_i$ for all $i$, or if no such edge exists, then $x$ is not contained in any edge of $H_s'$. This implies that $\I(H_s')$ is a cone and is contractable as a simplicial complex, which means $\eta(\I(H)) \geq \eta(\I(H_s')) = \infty$. Thus, we may assume that $H_i'=H_{i-1}'/e_i$ for some $i$, and let $e_i=\{x,y\}$. Similarly to before, we have $\I(H_i')=\I(H'/e_i)$. Let $\N'$ be the matroid obtained by removing all circuits of $\N$ containing $x$. Since $x$ is a coloop in $\N'$, we have $\rho_{\N'/\{x,y\}}(X_j) \geq j-1$ for all $j$. Thus, by induction applied to $\N'/\{x,y\}$ and $X_2,\dots,X_m$, $\eta(\I(H'/e_i))\geq \ell -1$. Therefore, $\eta(\I(H))\geq \eta(\I(H')) \geq \eta(\I(H'/e_i)) +1 \geq \ell$.

\end{proof}

\begin{proof}[Proof of Theorem \ref{thm:main}]
We will assume that $\M$ is a matroid of rank $n$, which we can do without loss of generality since we can always replace $\M$ with the matroid whose independent sets consist of the independent sets of $\M$ with size at most $n$.

Let $A_1,\dots,A_{2n-1}$ be independent sets of $\M$ and $\N$ as in the statement of the theorem. Let $\mathcal{C}$ be the simplicial complex with vertex set $V'$ given by the set of pairs $\{(x,i)\mid x\in A_i\}$ and with simplices of the form $\{(x_1,i_1),\dots,(x_m,i_m)\}$ where the $i_j$'s, as well as the $x_j$'s, are pairwise distinct, and $\{x_1,\dots,x_m\}$ is independent in $\N$. Let $\M'$ be the matroid on the same ground set $V'$ whose independent sets are sets of the form $\{(x_1,i_1),\dots,(x_m,i_m)\}$ where the $x_j$'s are pairwise distinct and $\{x_1,\dots,x_m\}$ is independent in $\M$ (the $i_j$'s do not have to be distinct here). Define $\N'$ similarly.

Notice that the existence of a set that is simultaneously a basis of $\M'$ and a simplex of $\mathcal{C}$ gives us a partial rainbow set as in the conclusion of Theorem \ref{thm:main} and hence, would complete the proof. We will apply Theorem \ref{thm:matchability} to prove the existence of such a set. 

Let $F'$ be a flat of $\M'$ of rank $k$ and let $S'$ be its complement. Notice that there is a corresponding flat $F$ of $\M$ such that $F'=\{(x,i)\mid x\in F,\, 1\leq i\leq 2n-1\}$. It is clear from the definition of $\M'.S'$ that $\rho(\M'.S') \leq n-k$, so it suffices to prove that $\eta(\mathcal{C}|S') \geq n-k$. Let $S$ be the complement of $F$. We have  $\rho_{\M}(A_i \cap S)\geq \rho_\M(A_i) - k$, which implies that $|A_i\cap S|\geq |A_i|-k$. Therefore, the lower bound on $\eta(\mathcal{C}|S)$ follows from applying Lemma \ref{lem:main}  to the matroid $\N'|S'$ and the sets $X_i=\{(x,i)\mid x\in A_i\cap S\}$ for $1\leq i\leq 2n-1$. This completes the proof.

\end{proof}

\begin{bibdiv}
\begin{biblist}

\bib{aharoni2006intersection}{article}{
      author={Aharoni, Ron},
      author={Berger, Eli},
       title={The intersection of a matroid and a simplicial complex},
        date={2006},
        ISSN={0002-9947},
     journal={Trans. Amer. Math. Soc.},
      volume={358},
      number={11},
       pages={4895\ndash 4917},
         url={https://doi-org.ezproxy.princeton.edu/10.1090/S0002-9947-06-03833-5},
      review={\MR{2231877}},
}

\bib{aharoni2009rainbow}{article}{
      author={Aharoni, Ron},
      author={Berger, Eli},
       title={Rainbow matchings in {$r$}-partite {$r$}-graphs},
        date={2009},
     journal={Electron. J. Combin.},
      volume={16},
      number={1},
       pages={Research Paper 119, 9},
         url={https://doi-org.ezproxy.princeton.edu/10.37236/208},
      review={\MR{2546322}},
}

\bib{aharoni2008acyclic}{article}{
      author={Aharoni, Ron},
      author={Berger, Eli},
      author={Kfir, Ori},
       title={Acyclic systems of representatives and acyclic colorings of digraphs},
        date={2008},
        ISSN={0364-9024},
     journal={J. Graph Theory},
      volume={59},
      number={3},
       pages={177\ndash 189},
         url={https://doi-org.ezproxy.princeton.edu/10.1002/jgt.20325},
      review={\MR{2454041}},
}

\bib{aharoni2018degree}{article}{
      author={Aharoni, Ron},
      author={Berger, Eli},
      author={Kotlar, Dani},
      author={Ziv, Ran},
       title={Degree conditions for matchability in 3-partite hypergraphs},
        date={2018},
        ISSN={0364-9024},
     journal={J. Graph Theory},
      volume={87},
      number={1},
       pages={61\ndash 71},
         url={https://doi-org.ezproxy.princeton.edu/10.1002/jgt.22140},
      review={\MR{3729835}},
}

\bib{berger2024coloring}{article}{
      author={Berger, Eli},
      author={Guo, He},
       title={Coloring the intersection of two matroids},
        date={2025},
        ISSN={0002-9939,1088-6826},
     journal={Proc. Amer. Math. Soc.},
      volume={153},
      number={10},
       pages={4145\ndash 4154},
         url={https://doi.org/10.1090/proc/17274},
      review={\MR{4949753}},
}

\bib{brualdi1991combinatorial}{book}{
      author={Brualdi, Richard~A.},
      author={Ryser, Herbert~J.},
       title={Combinatorial matrix theory},
      series={Encyclopedia of Mathematics and its Applications},
   publisher={Cambridge University Press, Cambridge},
        date={1991},
      volume={39},
        ISBN={0-521-32265-0},
         url={https://doi-org.ezproxy.princeton.edu/10.1017/CBO9781107325708},
      review={\MR{1130611}},
}

\bib{chappell1999matroid}{article}{
      author={Chappell, Glenn~G.},
       title={A matroid generalization of a result on row-{L}atin rectangles},
        date={1999},
        ISSN={0097-3165},
     journal={J. Combin. Theory Ser. A},
      volume={88},
      number={2},
       pages={235\ndash 245},
         url={https://doi-org.ezproxy.princeton.edu/10.1006/jcta.1999.2999},
      review={\MR{1723795}},
}

\bib{drisko1998transversals}{article}{
      author={Drisko, Arthur~A.},
       title={Transversals in row-{L}atin rectangles},
        date={1998},
        ISSN={0097-3165},
     journal={J. Combin. Theory Ser. A},
      volume={84},
      number={2},
       pages={181\ndash 195},
         url={https://doi-org.ezproxy.princeton.edu/10.1006/jcta.1998.2894},
      review={\MR{1652837}},
}

\bib{kotlar2015rainbow}{article}{
      author={Kotlar, Daniel},
      author={Ziv, Ran},
       title={Rainbow sets in the intersection of two matroids: a generalization of results of {D}risko and {C}happell},
        date={2015},
        ISSN={0012-365X},
     journal={Discrete Math.},
      volume={338},
      number={5},
       pages={695\ndash 697},
         url={https://doi-org.ezproxy.princeton.edu/10.1016/j.disc.2014.12.005},
      review={\MR{3303850}},
}

\bib{meshulam2003domination}{article}{
      author={Meshulam, Roy},
       title={Domination numbers and homology},
        date={2003},
        ISSN={0097-3165},
     journal={J. Combin. Theory Ser. A},
      volume={102},
      number={2},
       pages={321\ndash 330},
         url={https://doi-org.ezproxy.princeton.edu/10.1016/S0097-3165(03)00045-1},
      review={\MR{1979537}},
}

\bib{oxley2011}{book}{
      author={Oxley, James},
       title={Matroid theory},
     edition={Second},
      series={Oxford Graduate Texts in Mathematics},
   publisher={Oxford University Press, Oxford},
        date={2011},
      volume={21},
        ISBN={978-0-19-960339-8},
         url={https://doi.org/10.1093/acprof:oso/9780198566946.001.0001},
      review={\MR{2849819}},
}

\bib{Ryser1967}{article}{
      author={Ryser, H.~J.},
       title={Neuere problem in der kombinatorik},
        date={1967},
     journal={Vortraheuber Kombinatorik, Oberwohlfach},
       pages={69\ndash 91},
}

\bib{stein1975transversals}{article}{
      author={Stein, S.~K.},
       title={Transversals of {L}atin squares and their generalizations},
        date={1975},
        ISSN={0030-8730},
     journal={Pacific J. Math.},
      volume={59},
      number={2},
       pages={567\ndash 575},
         url={http://projecteuclid.org.ezproxy.princeton.edu/euclid.pjm/1102905365},
      review={\MR{387083}},
}

\end{biblist}
\end{bibdiv}

\end{document}